\documentclass[letterpaper, 10 pt, conference]{ieeeconf}

\IEEEoverridecommandlockouts	
\overrideIEEEmargins

\usepackage{cite}
\usepackage{amsmath,amssymb,amsfonts}
\usepackage{algorithmic}
\usepackage{textcomp}
\usepackage{graphicx,color}
\usepackage{mathrsfs}
\usepackage[vlined,ruled]{algorithm2e}
\usepackage{subfigure}
\usepackage{url}
\usepackage{color}
\usepackage{dsfont}
\usepackage{bbm}
\usepackage{booktabs}
\usepackage{array}
\usepackage[table]{xcolor} 
\usepackage{yfonts}
\usepackage[normalem]{ulem}
\usepackage{arydshln}

\usepackage[colorlinks]{hyperref}

\newtheorem{theorem}{Theorem}[section]
\newtheorem{lemma}[theorem]{Lemma}

\newtheorem{corollary}[theorem]{Corollary}

\newtheorem{remark}{Remark}

\newtheorem{example}{Example}

\newcommand{\subscr}[2]{{#1}_{\textup{#2}}}

\usepackage{tabularx}
\usepackage{multirow}
\usepackage{makecell}

\newcommand\aamsout{\bgroup\markoverwith{\textcolor{violet}{\rule[0.5ex]{2pt}{1pt}}}\ULon}


\newcommand{\real}{\mathbb{R}}

\newcommand{\transpose}{\mathsf{T}} 

\newcommand{\mc}{\mathcal}

\newcommand{\Trace}{\mathsf{Tr}}

\DeclareSymbolFont{bbold}{U}{bbold}{m}{n}
\DeclareSymbolFontAlphabet{\mathbbold}{bbold}

%

\newcommand\oprocendsymbol{\hbox{$\square$}}
\newcommand\oprocend{\relax\ifmmode\else\unskip\hfill\fi\oprocendsymbol}


\newcommand*{\QEDA}{\hfill\ensuremath{\blacksquare}}%
\newenvironment{pfof}[1]{\vspace{1ex}\noindent{\itshape Proof of
    #1:}\hspace{0.5em}} {\hfill\QEDA\vspace{1ex}}

\graphicspath{{figs/}}

\hyphenation{op-tical net-works semi-conduc-tor}

\begin{document}

\title{\bf Accuracy Prevents Robustness in Perception-based Control}


\author{Abed~AlRahman~Al~Makdah,~Vaibhav~Katewa,~and~Fabio~Pasqualetti%
  \thanks{This work was supported in part by ARO award 71603NSYIP and
    in part by ONR award N00014-19-1-2264. The authors are with the
    Department of Electrical and Computer Engineering and the
    Department of Mechanical Engineering at the University of
    California, Riverside,
    \href{mailto:aalm005@ucr.edu}{\{\texttt{aalmakdah}},\href{mailto:vkatewa@engr.ucr.edu}{\texttt{vkatewa}},\href{mailto:fabiopas@engr.ucr.edu}{\texttt{fabiopas\}@engr.ucr.edu}}.}}

\maketitle

\begin{abstract}
  In this paper we prove the existence of a fundamental trade-off
  between accuracy and robustness in perception-based control, where
  control decisions rely solely on data-driven, and often incompletely
  trained, perception maps. In particular, we consider a control
  problem where the state of the system is estimated from measurements
  extracted from a high-dimensional sensor, such as a camera. We
  assume that a map between the camera's readings and the state of the
  system has been learned from a set of training data of finite size,
  from which the noise statistics are also estimated. We show that
  algorithms that maximize the estimation accuracy (as measured by the
  mean squared error) using the learned perception map tend to perform
  poorly in practice, where the sensor's statistics often differ from
  the learned ones. Conversely, increasing the variability and size of
  the training data leads to robust performance, however limiting the
  estimation accuracy, and thus the control performance, in nominal
  conditions. Ultimately, our work proves the existence and the
  implications of a fundamental trade-off between accuracy and
  robustness in perception-based control, which, more generally,
  affects a large class of machine learning and data-driven algorithms
  \cite{AALM-VK-FP:19,DT-SS-LE-AT-AM:19,ZD-CD-JW-YZ:19,HZ-YY-JJ-EPX-LEG-MIJ:19b}.
\end{abstract}

%

\section{Introduction}\label{sec: introduction}
Machine learning methods are rapidly being deployed for a broad class
of applications, ranging from speech recognition and malware
detection, to control design and dynamic decision making. These
data-driven algorithms often outperform classical methods and require,
typically, substantially less knowledge about the specifics of the
problem. For control applications, in particular, data-driven
algorithms promise to overcome the limitations of traditional
model-based approaches, and to provide solutions to complex control
problems where a detailed model of the plant and its operating
environment is either too complex to be useful, or too difficult to
estimate or derive from first principles
\cite{BR:18,FLL-DV-KGV:12,PZ-JI-BF-SF:17}. Yet, the lack of
strong guarantees for the safety and robustness of data-driven
algorithms questions their deployment, especially in applications such
as autonomous driving and exploration. 

In this paper, we characterize a fundamental trade-off between
accuracy and robustness in a data-driven control problem. We consider
a perception-based control scenario, Fig. \ref{fig: perc_cont}, where a camera is used to
partially measure the state of a dynamical system and construct an
estimator of the full state. We assume that the output map between the
high-dimensional camera stream and the system state has been learned
accurately \cite{SD-NM-BR-VY:19}, although the estimated statistics of
the measurement noise are inaccurate. Such inaccuracies, which can
arise from limited training data, sudden changes in environmental
conditions, and adversarial manipulation, are unknown to the estimator
and induce incorrect confidence bounds on the estimated state
variables. In turn, inaccurate confidence bounds can lead to harmful
control decisions \cite{SL:16}. Further, we show that, because of the
incorrect noise statistics, accuracy of the estimation algorithm can
be improved only at the expenses of its robustness. Thus, estimation
algorithms that are optimal in the nominal training phase may
underperform in practice compared to suboptimal algorithms. Our
analytical results provide an explanation as to why nominally
suboptimal data-driven algorithms can exhibit better generalization
and robust properties in practice~\cite{AI-SS-DT-LE-BT-AM:19b}.


\begin{figure}[!t]
  \centering
  \includegraphics[width=\columnwidth]{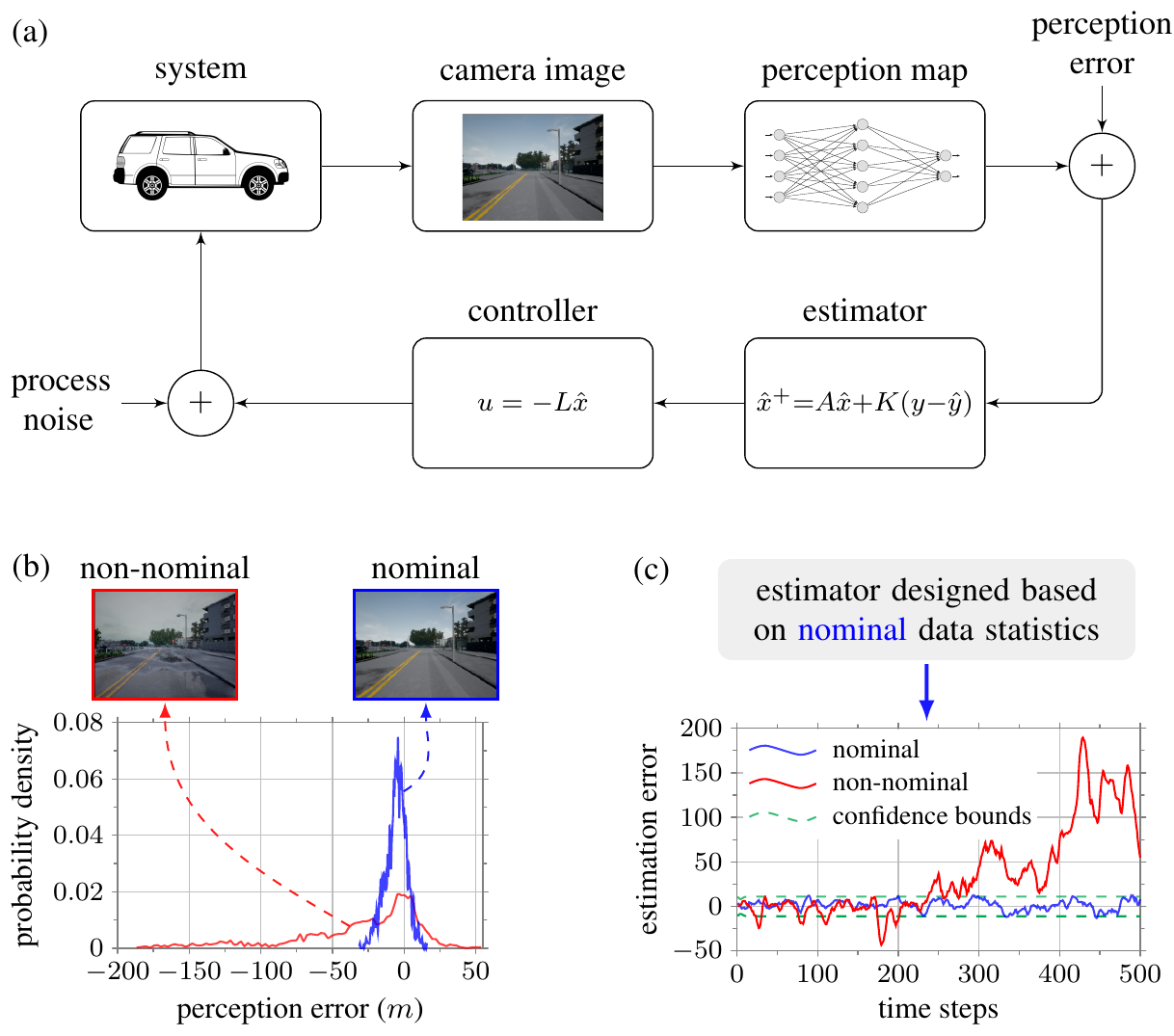}
  \caption{Panel (a) shows a perception-based control scenario, where
    the partial state of a dynamical system (vehicle) is extracted
    from the measurements of a high-dimensional sensor (camera) and
    used to implement a feedback control algorithm. A perception map
    is learned from a set of training data of finite size, which
    relates the sensor's readings to the system's state. Panel (b)
    shows the probability density functions of the perception error
    when operating in nominal (clear weather, as represented by the
    training data) and non-nominal (rainy weather, as it may occur in
    practice) conditions (error statistics are computed numerically
    using the simulator CARLA \cite{AD-GR-FC-AL-VK:17}). Due to
    inaccuracies and uncertainties in the sensed data, the error
    statistics of the perception map differ from the statistics
    learned during the training phase. As shown in panel (c),
    discrepancies in the error statistics lead to poor estimation
    performance in practical conditions. As we prove in this paper, a
    fundamental trade-off exists between accuracy and robustness of a
    linear estimator (consequently, in the considered perception-based
    control setting), so that estimators that perform well on the
    training data may exhibit poor performance with non-nominal
    conditions, while robust estimators may exhibit mediocre yet
    robust performance in a broad set of conditions.}
  \label{fig: perc_cont}
\end{figure}



\smallskip
\noindent
\textbf{Related work.} Machine learning and, more generally,
data-driven algorithms have shown remarkable performance under nominal
and well-modeled conditions in a variety of applications. Yet, the
same algorithms have proven extremely fragile when subject to small,
yet targeted, perturbations of the data \cite{CS-WZ-IS-JB-DE-IG-RF:14,
  IJG-JS-CS:14}. A detailed understanding of this unreliable behavior
is still lacking, with recent theoretical results proving robustness
and generalization guarantees for learning algorithms subject to
adversarial disturbances, e.g., see
\cite{SY-KP:18,BH-TK:01,OA-EH-SM:15}, and showing that, in certain
contexts, robustness to perturbations and performance under nominal
conditions are inversely related
\cite{AALM-VK-FP:19,DT-SS-LE-AT-AM:19,ZD-CD-JW-YZ:19,HZ-YY-JJ-EPX-LEG-MIJ:19b}.
Compared to these works, we prove that a fundamental trade-off between
accuracy and robustness also arises in linear estimation algorithms,
which may lead to a critical degradation of the closed loop
performance \cite{SL:16}.

Related to this work is the literature on robust control and
estimation \cite{KZ-JCD:98,NM-LM:96}. However, the primary focus of
this paper is not on designing a robust estimator or controller, but
rather on proving the existence of a fundamental trade-off between
accuracy and robustness, which plays a critical role in the deployment
of learning and data-driven methods in control applications, including
perception-based control.

Finally, the literature on perception-based control is also very rich,
with results ranging from integrating camera measurements with
inertial odometry \cite{JK-GSS:11}, to control of unmanned aerial
vehicles \cite{GL-CB-GM-VK:16} and vision-based planning
\cite{SB-VT-SG-JM-CT:19b}, to name a few. To the best of our knowledge,
the trade-off between accuracy and robustness that we highlight here
was not discussed in any of the above research streams.


\smallskip
\noindent
\textbf{Paper contributions.} This paper features two main
contributions. First, we study a perception-based control problem,
where the state of a dynamical system is reconstructed using a
high-dimensional sensor. We prove the existence of a fundamental
trade-off between the accuracy of the estimation algorithm, as
measured by its minimum mean squared error, and its robustness to
variations and inaccuracies of the data statistics. Thus, (i)
estimation algorithms that are optimal for the nominal data tend to
perform poorly in practice, where the operating conditions may differ
from the nominal data, and, conversely, (ii) estimation algorithms
that are robust to data variations exhibit suboptimal performance in
nominal conditions. Second, we characterize estimators that lie on the
Pareto frontier between accuracy and robustness, that is, estimators
that are maximally robust for a desired performance level, and
estimators that are maximally accurate for a given bound on the data
variations and inaccuracies. We also show, numerically, that the
trade-off for estimation algorithms also affects the performance of
the closed-loop~system, and even when the measurement error is not
normally distributed, as we assume for the derivation of our
analytical results.

In a broader context, the results of this paper further characterize a
fundamental limitation of machine learning and data-driven algorithms,
as described for different settings in
\cite{AALM-VK-FP:19,DT-SS-LE-AT-AM:19,ZD-CD-JW-YZ:19,HZ-YY-JJ-EPX-LEG-MIJ:19b},
and clarify its implications for control applications.

\smallskip
\noindent
\textbf{Paper's organization.} The rest of the paper is organized as
follows. Section \ref{sec: setup} contains our mathematical
setup. Section \ref{sec: trade-off} contains the trade-off between
accuracy and robustness, and the design of optimal estimators. Section
\ref{sec: example} contains our numerical example, and Section
\ref{sec: conclusion} concludes the paper.

\smallskip
\noindent
\textbf{Notation.} A Gaussian random variable $x$ with mean $\mu$ and
covariance $\Sigma$ is denoted as $x\sim\mc{N}(\mu,\Sigma)$. The
$n\times n$ identity matrix is denoted by $I_n$. The expectation
operator is denoted by $\mathbb{E}[\cdot]$. The spectral radius and
the trace of a square matrix $A$ are denoted by $\rho(A)$ and
$\Trace(A)$, respectively. A positive definite (semidefinite) matrix
$A$ is denoted as $A>0$ ($A\geq 0$). The Kronecker product is denoted
by $\otimes$, and vectorization operator is denoted by vec($\cdot$).


\section{Problem setup and preliminary notions}\label{sec: setup}
Consider the discrete-time, linear, time-invariant system
\begin{align}
  x(t+1) &= Ax(t) + w(t), \label{eq: dynamical model} \\ 
  y(t) &= Cx(t) + v(t), \qquad t\geq 0,\label{eq: output model}
\end{align}
where $x(t)\in\real^{n}$ denotes the state, $y(t)\in\real^{m}$ the
output, $w(t)$ the process noise, and $v(t)$ the measurement noise. We
assume that $w(t)\sim\mc{N}(0,Q)$, with $Q\geq 0$,
$v(t)\sim\mc{N}(0,R)$, with $R>0$, and $x(0)\sim\mc{N}(0,\Sigma_0)$,
with $\Sigma_0 \ge 0$, are independent of each other at all times
$t\geq 0$.\footnote{See Section \ref{sec: example} for numerical
  examples showing that our main results seem to be valid also when
  some of these assumptions are not satisfied.} Finally, we assume
that $A$ is stable, that is, $\rho(A)<1$. Note that this implies that
$(A,C)$ is detectable and $(A,Q^{\frac{1}{2}})$ is stabilizable.


We use a linear filter with constant gain $K\in\real^{n\times m}$ to
estimate the state of the system \eqref{eq: dynamical model} from the
measurements~\eqref{eq: output model}:
\begin{align}\label{eq:lin_filter}
  \hat{x}(t+1) = A\hat{x}(t) + K[y(t+1)-CA\hat{x}(t)] \quad t\geq 0,
\end{align} 
where $\hat{x}(t)$ denotes the state estimate at time $t$. Let
$e(t)=x(t)-\hat{x}(t)$ and $P(t) = \mathbb{E}[e(t)e(t)^{\transpose}]$
denote the estimation error and its covariance, respectively. For
$t\geq 0$, we have
\begin{align}
e(t+1) &= A_K e(t) + B_K w(t) - Kv(t+1),\\
P(t+1) &= A_K P(t)A_K^{\transpose} + B_K Q B_K^{\transpose} + K R K^{\transpose},
\end{align}
where $A_K \triangleq A-KCA$ and $B_K \triangleq I_n-KC$. We assume
that the gain $K$ is chosen such that $A_K$ is stable, that is,
$\rho(A_K)<1$. Under this assumption,
$\underset{t\rightarrow \infty }{\lim} P(t) \triangleq P(K) \geq 0 $
exists, and satisfies the Lyapunov equation
\begin{align}\label{eq: lyapunov equation error covariance}
P(K)= A_KP(K)A_K^{\transpose} + B_K Q B_K^{\transpose} + K R K^{\transpose}.
\end{align}
The performance of the filter is quantified by
$\mathcal{P}(K) \triangleq \Trace(P(K))$, where a lower value of
$\mc{P}(K)$ is desirable. Note that the steady-state gain
$\subscr{K}{kf}$
  of the Kalman filter \cite{TK:80} minimizes
$\mc{P}(K)$ and depends on the matrices $A$, $C$, $Q$, $R$.

We allow for perturbations to the covariance matrix $R$, which may
result from (i) modeling and estimation errors, as in the case of
perception-based control, or (ii) accidental or adversarial tampering
of the sensor, as in the case of false data injection attacks
\cite{FP-FD-FB:10y}. To quantify the effect of such perturbations to
the covariance matrix $R$ on the performance of the estimator, we
define the following sensitivity metric:
\begin{align}\label{eq: definition of sensitivity}
\mc{S}(K) \triangleq \Trace \left[\frac{d}{dR}\mc{P}(K)\right].
\end{align}
Intuitively, if $\mc{S}(K)$ is large, then a small change in $R$ can
result in a large change (possibly, large increment) in~$\mc{P}(K)$.

\begin{remark}{\bf \emph{(Comparison with adversarial
      robustness)}}\label{remark: adversarial robustness}
  In adversarial settings, the adversary designs a small deterministic
  perturbation added to a given observation (e.g., pixels of an image)
  to deteriorate the performance of a machine learning algorithm. This
  perturbed observation can be viewed as a realization of a
  multi-dimensional distribution. Instead, in this work we consider
  perturbations to the sensor's noise covariance, which accounts for
  all possible realizations. Thus, our sensitivity metric captures the
  average performance change over all possible perturbations, rather
  than the degradation caused by a single worst-case
  perturbation.~\oprocend
\end{remark}

Lower values of sensitivity $\mc{S}(K)$ are desirable, and indicate
that the filter \eqref{eq:lin_filter} is more robust to
perturbations. This motivates the following optimization problem:
\begin{equation}\label{eq:Opt_prob}
\begin{aligned}
\mc{S}^{*}(\delta) = & \;\underset{K}{\min} \quad \mc{S}(K)\\
&\;\text{s.t.} \quad  \mc{P}(K) \leq \delta,
\end{aligned}
\end{equation}
where $\delta \geq \mc{P}(\subscr{K}{kf})$ for feasibility. In what
follows, we characterize the solution $K^*$ to \eqref{eq:Opt_prob},
and the relations between the sensitivity $\mc S(K^*)$ and the error
$\mc P(K^*)$ as $\delta$ varies. To facilitate the
discussion, in the remainder of the paper we use \emph{accuracy} to
refer to any decreasing function of the error $\mc P(K)$ obtained by
the gain $K$, and \emph{robustness} to denote any decreasing function
of the sensitivity $\mc S(K)$ of the gain $K$.


\section{Accuracy vs robustness trade-off in \\linear estimation
  algorithms}\label{sec: trade-off}
We begin by characterizing the sensitivity $\mc{S}(K)$.
\begin{lemma}{\bf \emph{(Characterization of
      sensitivity)}}\label{lem:sens_charac}
  Let the sensitivity $\mc{S}(K)$ be as in \eqref{eq: definition of
    sensitivity}. Then, $\mc{S}(K) = \Trace(S(K))$, where
  $S(K)\geq 0 $ satisfies the following Lyapunov equation:
  \begin{align}\label{eq:S_Lyap}
    S(K) = A_K S(K) A_K^{\transpose} +
    K K^{\transpose}.
  \end{align}
\end{lemma}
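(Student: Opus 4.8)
The plan is to exploit the fact that, because $A_K$ is stable, the error covariance $P(K)$ is the unique solution of the Lyapunov equation \eqref{eq: lyapunov equation error covariance}, and that this equation is \emph{affine} in $R$, with the matrices $A_K$, $B_K$, and $K$ all independent of $R$. This linearity is precisely what makes the sensitivity computable in closed form. I interpret $\frac{d}{dR}\mc{P}(K)$ as the matrix of partial derivatives with $(i,j)$ entry $\partial \mc{P}(K)/\partial R_{ij}$, so that $\mc{S}(K)=\Trace[\frac{d}{dR}\mc{P}(K)]=\sum_i \partial\mc{P}(K)/\partial R_{ii}$.

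First I would differentiate \eqref{eq: lyapunov equation error covariance} entrywise. Let $E_{ij}$ denote the matrix with a single $1$ in position $(i,j)$ and zeros elsewhere, and set $S_{ij}\triangleq \partial P(K)/\partial R_{ij}$. Since only the term $K R K^{\transpose}$ depends on $R$ and $\partial R/\partial R_{ij}=E_{ij}$, differentiating both sides gives the Lyapunov equation
\[
S_{ij}=A_K S_{ij} A_K^{\transpose} + K E_{ij} K^{\transpose}.
\]
By stability of $A_K$ this equation admits a unique solution, which also legitimizes the differentiation, since the map $R\mapsto P(K)$ is affine and hence smooth.

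Next I would sum these equations over the diagonal indices. Defining $S(K)\triangleq\sum_i S_{ii}$ and using that the Lyapunov operator $X\mapsto X-A_K X A_K^{\transpose}$ is linear, summation yields
\[
S(K)=A_K S(K) A_K^{\transpose} + K\Big(\sum_i E_{ii}\Big)K^{\transpose}=A_K S(K)A_K^{\transpose}+KK^{\transpose},
\]
where I used $\sum_i E_{ii}=I_m$. This is exactly \eqref{eq:S_Lyap}. Positive semidefiniteness of $S(K)$ then follows from the standard fact that a discrete Lyapunov equation with stable $A_K$ and positive semidefinite forcing term $KK^{\transpose}$ has a positive semidefinite solution, namely $S(K)=\sum_{k\ge 0}A_K^k KK^{\transpose}(A_K^{\transpose})^k\ge 0$. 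Finally, commuting the trace with the finite sum over $i$ gives
\[
\mc{S}(K)=\sum_i \frac{\partial \mc{P}(K)}{\partial R_{ii}}=\sum_i\Trace(S_{ii})=\Trace\Big(\sum_i S_{ii}\Big)=\Trace(S(K)),
\]
which completes the argument.

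I expect the only delicate point to be bookkeeping: making precise the meaning of the matrix derivative $\frac{d}{dR}\mc{P}(K)$ and of its trace, and observing that because the trace selects only the diagonal entries $R_{ii}$, the (symmetric) off-diagonal structure of $R$ never enters the computation, so no symmetrization correction is needed. Everything else—existence, uniqueness, and differentiability—is inherited from the stability of $A_K$, and the collapse $\sum_i E_{ii}=I_m$ is exactly what turns the entrywise forcing terms $K E_{ii} K^{\transpose}$ into the clean $KK^{\transpose}$ appearing in \eqref{eq:S_Lyap}.
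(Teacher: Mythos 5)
Your proof is correct, but it takes a genuinely different route from the paper's. You differentiate the Lyapunov equation \eqref{eq: lyapunov equation error covariance} entrywise, observe that each partial derivative $S_{ij}=\partial P(K)/\partial R_{ij}$ solves a Lyapunov equation with forcing $KE_{ij}K^{\transpose}$, and then use linearity of the Lyapunov operator plus $\sum_i E_{ii}=I_m$ to collapse the diagonal family into \eqref{eq:S_Lyap}; everything stays in the ``primal'' domain. The paper instead works with matrix differentials and an adjoint variable: it introduces $M>0$ solving $M=A_K^{\transpose}MA_K+I_n$, invokes the trace-duality result (Lemma \ref{lem:tech_res}) to obtain the closed-form gradient $\frac{d}{dR}\mc{P}(K)=K^{\transpose}MK$, and then applies Lemma \ref{lem:tech_res} a second time to identify $\Trace(KK^{\transpose}M)$ with $\Trace(S(K))$. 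Your argument is more elementary and self-contained---it needs neither the adjoint equation nor Lemma \ref{lem:tech_res}, only uniqueness and linearity of Lyapunov solutions under $\rho(A_K)<1$---and your observation that the trace touches only the diagonal perturbations $E_{ii}$, so no symmetrization correction for the symmetric matrix $R$ is needed, is exactly the right bookkeeping and is consistent with the paper's implicit convention. What the paper's detour buys is the explicit derivative matrix $K^{\transpose}MK$, which is reused downstream: Remark \ref{remark: robust} bounds the worst-case degradation via $\Delta\mc{P}(K)\approx\Trace(K^{\transpose}MK\Delta R)$, and the same adjoint $M$ reappears in the first- and second-order conditions in the proof of Theorem \ref{theorem: optimal gain}. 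Your approach delivers the trace (all the lemma asks for), but recovering an individual entry $\partial\mc{P}(K)/\partial R_{ij}=\Trace(S_{ij})$ would require solving a separate Lyapunov equation rather than reading it off a single formula.
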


\smallskip
Lemma \ref{lem:sens_charac} allows us to compute the sensitivity of
the linear estimator \eqref{eq:lin_filter} as a function of its gain.
Before proving Lemma \ref{lem:sens_charac}, we present the following
technical result.


\begin{lemma}{\bf \emph{(Property of the solution to Lyapunov
      equation)}}\label{lem:tech_res}
  Let $A$, $B$, $Q$ be matrices of appropriate dimension with
  $\rho(A)<1$. Let $Y$ satisfy $Y = AYA^{\transpose} + Q$. Then,
  $\Trace(BY) = \Trace(Q^{\transpose}M)$, where $M$ satisfies
  $M = A^{\transpose}MA + B^{\transpose}$.
\end{lemma}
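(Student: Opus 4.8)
The plan is to prove the identity $\Trace(BY) = \Trace(Q^{\transpose}M)$ by using the explicit series representation of the solutions to both Lyapunov equations and then comparing them term by term. Since $\rho(A) < 1$, the discrete Lyapunov equation $Y = AYA^{\transpose} + Q$ has the unique convergent solution
\begin{align*}
  Y = \sum_{k=0}^{\infty} A^{k} Q (A^{\transpose})^{k},
\end{align*}
and similarly the adjoint equation $M = A^{\transpose}MA + B^{\transpose}$ admits the unique solution
\begin{align*}
  M = \sum_{k=0}^{\infty} (A^{\transpose})^{k} B^{\transpose} A^{k}.
\end{align*}
Both series converge absolutely because the spectral radius condition guarantees geometric decay of the matrix powers $A^{k}$.

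First I would substitute the series for $Y$ into $\Trace(BY)$ and use linearity and continuity of the trace to interchange it with the infinite sum, obtaining $\Trace(BY) = \sum_{k=0}^{\infty} \Trace\big(B A^{k} Q (A^{\transpose})^{k}\big)$. Next I would apply the cyclic property of the trace, $\Trace(XYZ) = \Trace(ZXY)$, to move the factor $(A^{\transpose})^{k}$ to the front, yielding $\Trace\big((A^{\transpose})^{k} B A^{k} Q\big)$ for each summand. Recognizing that $\Trace(NQ) = \Trace(Q^{\transpose}N^{\transpose})$ for any conformable $N$, I would rewrite each term as $\Trace\big(Q^{\transpose} (A^{\transpose})^{k} B^{\transpose} A^{k}\big)$. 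Summing over $k$ and interchanging the trace with the sum once more gives $\Trace(BY) = \Trace\big(Q^{\transpose} \sum_{k=0}^{\infty} (A^{\transpose})^{k} B^{\transpose} A^{k}\big) = \Trace(Q^{\transpose}M)$, which is exactly the claim.

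The main obstacle, though a mild one, is justifying the interchange of the trace operator with the infinite summation; this is the only step that is not purely algebraic. I would address it by noting that the partial sums converge in any matrix norm (again by $\rho(A)<1$ and the submultiplicativity that forces $\|A^{k}\| \to 0$ geometrically), and that the trace is a continuous linear functional, so it commutes with the limit. An equivalent route that sidesteps series manipulation altogether is to verify the identity directly from the two Lyapunov equations: using $Y = AYA^{\transpose}+Q$ and the cyclic property, one can show that $\Trace(BY) - \Trace(B A Y A^{\transpose})$ telescopes appropriately, but the series approach is cleaner and makes the symmetry between the primal equation for $Y$ and the adjoint equation for $M$ transparent.
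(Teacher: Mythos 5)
Your proof is correct and follows essentially the same route as the paper's: both expand $Y$ and $M$ as the convergent series solutions of their respective Lyapunov equations and conclude via linearity of the trace together with its cyclic and transpose-invariance properties. Incidentally, your series $M=\sum_{k=0}^{\infty}(A^{\transpose})^{k}B^{\transpose}A^{k}$ is the correct one for $M=A^{\transpose}MA+B^{\transpose}$ — the paper's displayed series for $M$ contains a transposition typo that your version silently fixes — and your explicit justification for interchanging the trace with the infinite sum is a detail the paper omits.
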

\begin{proof}
  Since $\rho(A)<1$, $Y$ and $M$ can be written as
  \begin{align}\label{eq: solution of lyapunov equation Y}
    Y=\sum_{i=0}^{\infty} A^i Q (A^{\transpose})^i \text{ and }M=\sum_{i=0}^{\infty} A^i B (A^{\transpose})^i.
  \end{align}
  The result follows by pre-multiplying $Y$ and $M$ by $B$ and
  $Q^{\transpose}$ respectively, and using the cyclic property of
  trace.
\end{proof}

\begin{pfof}{Lemma \ref{lem:sens_charac}}
  Taking the differential of
  $\eqref{eq: lyapunov equation error covariance}$ with respect to the
  variable $R$, we get
  \begin{align}
    dP(K) &= A_K dP(K) A_K^{\transpose} + K dR K^{\transpose} \nonumber \\ \label{eq:d_Tr_P}
    \Rightarrow d\Trace (P(K)) &= \Trace(dP(K)) \overset{(a)}{=} \Trace(KdRK^{\transpose}M),
  \end{align}
  where $M>0$ satisfies: $M = A_K^{\transpose} M A_K + I_n$, and $(a)$
  follows from Lemma \ref{lem:tech_res}. From \eqref{eq:d_Tr_P}, we get
  \begin{align} \label{eq:d_dR_P} d \mc{P}(K) =
    \Trace(K^{\transpose}MKdR) \Rightarrow \frac{d}{dR} \mc{P}(K) =
    K^{\transpose}MK.
  \end{align}
  Using \eqref{eq:d_dR_P} and \eqref{eq: definition of sensitivity},
  we have that
  $\mc{S}(K) = \Trace(K^{\transpose}MK) = \Trace(KK^{\transpose}M) =
  \Trace(S(K))$, where $S(K)$ is defined in \eqref{eq:S_Lyap} and the
  last equality follows from Lemma \ref{lem:tech_res}. To conclude,
  the property $S(K)\geq 0$ follows by inspection from
  \eqref{eq:S_Lyap}.
\end{pfof}


Notice that, since $S(K)\geq 0$, $\mc{S}(K)=\Trace(S(K))$ is a valid
norm of $S(K)$ and captures the size of $S(K)$. Further, $S(K)=0$ for
$K=0$, that is, $K=0$ achieves the lowest possible value of
sensitivity. This implies that $\delta$ in the optimization problem
\eqref{eq:Opt_prob} can be restricted to $[\mc{P}(K_{\text{kf}}),\mc{P}(0)]$
to characterize the accuracy-robustness trade-off.

Next, we characterize the optimal solution to \eqref{eq:Opt_prob}. We
will show that, despite not being convex, the minimization problem
\eqref{eq:Opt_prob} exhibits a unique local minimum. This implies that
the local minimum is also the global minimum.


\begin{theorem}{\bf \emph{(Solution to the minimization problem
      \eqref{eq:Opt_prob})}}\label{theorem: optimal gain}
  Let $\delta \in [\mc{P}(K_{\text{kf}}),\mc{P}(0)]$ and $\lambda\geq0$. Let
  $X\geq 0$ be the unique solution to the following Riccati equation:
  \begin{align}\label{eq: riccati equation optimal gain}
    X = A X A^{\transpose} \!-\! AXC^{\transpose} (CXC^{\transpose} \!+\! I_m
    \!+\! \lambda R)^{-1} C X A^{\transpose} \!+\! \lambda Q. 
  \end{align}
  Then, the global minimum of problem \eqref{eq:Opt_prob} is given by
  \begin{align}\label{eq:opt_gain}
    K^*(\lambda)=XC^{\transpose}\Big(CXC^{\transpose} +I_m+\lambda
    R\Big)^{-1},
  \end{align}
  where $\lambda$ is selected such that
  $\mc{P}(K^{*}(\lambda)) \triangleq \mc{P}^{*}(\lambda) = \delta$.
\end{theorem}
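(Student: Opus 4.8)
The plan is to solve \eqref{eq:Opt_prob} by Lagrangian relaxation, exploiting the fact that the Lyapunov equations \eqref{eq: lyapunov equation error covariance} and \eqref{eq:S_Lyap} defining $\mc P(K)$ and $\mc S(K)$ share the same dynamics $A_K$ and differ only in their forcing terms. For a multiplier $\lambda\ge 0$ I would introduce the Lagrangian $\mc L(K,\lambda)=\mc S(K)+\lambda\big(\mc P(K)-\delta\big)$ and observe that $\mc S(K)+\lambda\mc P(K)=\Trace\big(W(K)\big)$, where $W(K)=S(K)+\lambda P(K)\ge 0$ satisfies
\begin{align*}
  W = A_K W A_K^{\transpose} + \lambda B_K Q B_K^{\transpose} + K\big(I_m+\lambda R\big)K^{\transpose}.
\end{align*}
This is precisely the steady-state error-covariance equation \eqref{eq: lyapunov equation error covariance} for a filter of the form \eqref{eq:lin_filter} applied to system \eqref{eq: dynamical model}--\eqref{eq: output model} with process-noise covariance $Q'=\lambda Q$ and measurement-noise covariance $R'=I_m+\lambda R>0$. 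Thus, for each fixed $\lambda$, minimizing the Lagrangian over $K$ is an ordinary minimum-variance filtering problem.

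I would then invoke the Riccati theory for this auxiliary problem. Because $\rho(A)<1$ makes $(A,C)$ detectable and $(A,(Q')^{1/2})$ stabilizable, and $R'>0$, the DARE \eqref{eq: riccati equation optimal gain} has a unique stabilizing solution $X\ge0$, the gain $K^*(\lambda)$ in \eqref{eq:opt_gain} renders $A_{K^*}$ stable, and $K^*(\lambda)$ globally minimizes $\Trace(W(K))$. Establishing \emph{global} rather than merely stationary optimality is the technical crux, since $K\mapsto\Trace(W(K))$ is not convex. I would obtain it from the minimum-variance property of the steady-state Kalman gain: completing the square in the Riccati map yields
\begin{align*}
  W(K) = \Sigma - \Sigma C^{\transpose}L^{-1}C\Sigma + (K-G)L(K-G)^{\transpose},
\end{align*}
with $\Sigma=AW(K)A^{\transpose}+Q'$, $L=C\Sigma C^{\transpose}+R'$, and $G=\Sigma C^{\transpose}L^{-1}$; combined with monotonicity of the Riccati recursion, this gives $W(K)\succeq W(K^*(\lambda))$ in the positive-semidefinite order for every stabilizing $K$, hence $\Trace(W(K))\ge\Trace(W(K^*(\lambda)))$.

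It remains to turn this unconstrained optimality into optimality for \eqref{eq:Opt_prob} by making the constraint bind. Choosing $\lambda=\lambda^*\ge0$ with $\mc P(K^*(\lambda^*))=\delta$, the weak-duality chain
\begin{align*}
  \mc S(K)\ge \mc S(K)+\lambda^*\big(\mc P(K)-\delta\big)\ge \min_{K'}\mc L(K',\lambda^*)=\mc S(K^*(\lambda^*))
\end{align*}
holds for every feasible $K$: the first inequality uses $\lambda^*\ge0$ and $\mc P(K)\le\delta$, and the final equality uses $\mc P(K^*(\lambda^*))=\delta$. Since $K^*(\lambda^*)$ is itself feasible, it is the global minimizer, and strong duality holds.

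Finally I would confirm that such a $\lambda^*$ exists: $K^*(\lambda)$ depends continuously on $\lambda$ (continuity of the stabilizing DARE solution in its data), with $K^*(0)=0$ so that $\mc P(K^*(0))=\mc P(0)$, and $K^*(\lambda)\to\subscr{K}{kf}$ as $\lambda\to\infty$ so that $\mc P(K^*(\lambda))\to\mc P(\subscr{K}{kf})$; the intermediate value theorem then yields a $\lambda^*\ge0$ attaining any prescribed $\delta\in[\mc P(\subscr{K}{kf}),\mc P(0)]$. The main obstacle throughout is the global-optimality step for the non-convex Lagrangian, which is exactly where the Riccati completing-the-square identity and Riccati monotonicity do the work.
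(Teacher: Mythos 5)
Your proposal is correct, but it reaches the result by a genuinely different route than the paper. The paper proceeds via matrix calculus: it computes $\tfrac{d}{dK}\mc S(K)$ and $\tfrac{d}{dK}\mc P(K)$ through differentials of the Lyapunov equations \eqref{eq: lyapunov equation error covariance} and \eqref{eq:S_Lyap}, extracts \eqref{eq:opt_gain} and \eqref{eq: riccati equation optimal gain} as the stationary KKT condition, verifies local optimality by showing the Hessian of the Lagrangian equals $2W\otimes M>0$, and then concludes global optimality from the uniqueness of the stationary point (via strict monotonicity of $\mc P^*(\lambda)$ in $\lambda$). You instead observe upfront that $S(K)+\lambda P(K)$ satisfies the combined Lyapunov equation with forcing $\lambda B_K Q B_K^{\transpose}+K(I_m+\lambda R)K^{\transpose}$, i.e., that the Lagrangian is exactly the steady-state error of a filter for an auxiliary system with noise covariances $\lambda Q$ and $I_m+\lambda R$; global minimization of the Lagrangian then follows from the classical optimality of the steady-state Kalman gain among constant stabilizing gains (completing the square plus Riccati monotonicity), and the constrained claim follows from Lagrangian sufficiency with a binding constraint and an intermediate-value argument for the existence of $\lambda^*$. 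Interestingly, the same quantity appears in the paper's proof (it defines $X=A(S+\lambda P)A^{\transpose}+\lambda Q$), but only as an algebraic device, not as an auxiliary filtering problem. Your route buys a cleaner and arguably tighter treatment of global optimality: the paper's inference ``unique stationary point that is a local minimum $\Rightarrow$ global minimum'' requires care on the non-convex, non-compact set of stabilizing gains (behavior at the boundary of stability and at infinity), whereas weak duality plus the minimum-variance property gives globality outright. What the paper's route buys in exchange is reusable machinery: the explicit gradient formulas feed directly into the trade-off result \eqref{eq: trade-off equation} of Theorem \ref{thm:trade-off} and the monotonicity statement of Corollary \ref{cor:monotonicity}, neither of which your argument produces as a by-product (your IVT step gives existence of $\lambda^*$ but not the strict monotonicity/uniqueness the paper establishes). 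One shared blemish: at the endpoint $\delta=\mc P(\subscr{K}{kf})$ the multiplier is attained only in the limit $\lambda\to\infty$, a boundary case that both you and the paper treat informally.
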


\smallskip
\begin{proof}
  \emph{First-order necessary conditions:} We begin by computing the
  derivatives of $\mc{P}(K)$ and $\mc{S}(K)$ with respect to the
  variable $K$. For notational convenience, we denote
  $A_K, B_K,P(K)$ and $S(K)$ by $\bar{A},B,P$ and $S$,
  respectively. Taking the differential of \eqref{eq:S_Lyap}, we get
  \begin{align} \label{eq:dS}
    dS &= \bar{A} dS \bar{A}^{\transpose} - dKCAS\bar{A}^{\transpose} - \bar{A}S(dKCA)^{\transpose} + dKK^{\transpose}\nonumber \\
       &+ KdK^{\transpose} \triangleq \bar{A} dS \bar{A}^{\transpose} + Z\\
    &\Rightarrow d\mc{S}(K)  \overset{(a)}{=} \Trace(dS) \overset{(b)}{=} \Trace(Z^{\transpose}M) \nonumber\\
       &= 2\Trace[(-CAS\bar{A}^{\transpose} + K^{\transpose})MdK] \nonumber \\ \label{eq:d_dK_Sens}
       &\Rightarrow \frac{d}{dK} \mc{S}(K) = 2M(K-\bar{A}SA^{\transpose}C^{\transpose}),
  \end{align}
  where $M>0$ satisfies $M = A_K^{\transpose} M A_K + I_n$, and
  $(a)$ and $(b)$ follow from Lemmas \ref{lem:sens_charac} and
  \ref{lem:tech_res}, respectively. A similar analysis of \eqref{eq:
    lyapunov equation error covariance} yields
  \begin{align} \label{eq:d_dK_Perf}
    \frac{d}{dK}\mc{P}(K) = 2M(KR-\bar{A}PA^{\transpose}C^{\transpose}-BQC^{\transpose}).
  \end{align}
  Define the Lagrange function of problem \eqref{eq:Opt_prob} as
  \begin{align}\label{eq: lagrangian}
    \mc L(K,\lambda)=\mc{S}(K) + \lambda \Big(\mc{P}(K)-\delta\Big),
  \end{align}
  where $\lambda$ is the Karush-Kuhn-Tucker (KKT) multiplier. The
  stationary KKT condition implies
  $\frac{d}{dK} \mc{L}(K,\lambda) = 0$, which using
  \eqref{eq:d_dK_Sens} and \eqref{eq:d_dK_Perf} becomes
  \begin{align} \label{eq:d_dK_L}
    2M[K-\bar{A}SA^{\transpose}C^{\transpose} + \lambda (KR
    -\bar{A}PA^{\transpose}C^{\transpose}- BQC^{\transpose})] = 0.
  \end{align}
  Substituting $\bar{A} = A-KCA$ in the above equation, defining
  $X\triangleq A(S+\lambda P)A^{\transpose} + \lambda Q$, and using
  $M>0$, we obtain \eqref{eq:opt_gain}.  Next, we show that $X$
  satisfies \eqref{eq: riccati equation optimal gain}. From \eqref{eq:
    lyapunov equation error covariance} and \eqref{eq:S_Lyap}:
  \begin{align*}
    &S + \lambda P = \bar{A} (S+\lambda P)\bar{A}^{\transpose} + \lambda BQB^{\transpose} + K (I_m + \lambda R) K^{\transpose} \nonumber \\
    &\Rightarrow X = A(S+\lambda P)A^{\transpose} + \lambda Q \\
    &= A\left[\bar{A} (S+\lambda P)\bar{A}^{\transpose} + \lambda
      BQB^{\transpose} + K (I_m + \lambda R) K^{\transpose}
      \right]A^{\transpose} \\&\;\;\;\; + \lambda Q.
  \end{align*}
  Using $\bar{A} = A-KCA$ and substituting the gain $K$ in
  \eqref{eq:opt_gain} in the above equation, we obtain the Riccati
  equation~\eqref{eq: riccati equation optimal gain}.

  The KKT condition for dual feasibility implies that $\lambda\geq 0$,
  so \eqref{eq: riccati equation optimal gain} has a unique
  stabilizing solution. Further, the KKT condition for complementary
  slackness implies $\lambda [\mc{P}(K^{*}(\lambda)) -\delta]=
  0$. Thus, if $\lambda>0$, then $\mc{P}(K^{*}(\lambda)) = \delta$. If
  $\lambda=0$, then the solution to \eqref{eq: riccati equation
    optimal gain} is $X=0$. This implies that $K^{*}(0) = 0$, which is
  feasible only if $\delta = \mc{P}(0)$. Thus, for any
  $\delta \in [\mc{P}(K_{\text{kf}}),\mc{P}(0)]$, it holds
  $\mc{P}(K^{*}(\lambda)) = \delta$.

  \medskip
  \noindent \emph{Second-order sufficient conditions:} We show that the
  stationary point \eqref{eq:opt_gain} corresponds to a local minimum. We begin by
  computing the second-order differential of $\mathcal{S}(K)$. Taking
  the differential of \eqref{eq:dS} and noting that $d^2K=0$, we get
  \begin{align}
    d^2S &= \bar{A}d^{2}S\bar{A}^{\transpose} -2dKCAdS\bar{A}^{\transpose} - 2\bar{A} dS (dKCA)^{\transpose} \nonumber \\
         &+ 2dK(I_p + CASA^{\transpose}C^{\transpose})dK^{\transpose} \nonumber \triangleq \bar{A}d^{2}S\bar{A}^{\transpose} + Y \\
    \Rightarrow d^2\mc{S}(K) &= \Trace(d^2S) = \Trace(YM) = -4\Trace(dKCAdS\bar{A}^{\transpose}M) \nonumber \\ \label{eq:d2_S}
         &+ 2\Trace(dK(I_p + CASA^{\transpose}C^{\transpose})dK^{\transpose}M).
  \end{align}
  Similar analysis of \eqref{eq: lyapunov equation error covariance}
  yields
  \begin{align} \label{eq:d2_P}
    d^2\mc{P}(K) &= -4\Trace(dKCAdP\bar{A}^{\transpose}M) \\
&+ 2\Trace[dK(R + CAPA^{\transpose}C^{\transpose} + CQC^{\transpose})dK^{\transpose}M].\nonumber
  \end{align}
  Adding \eqref{eq:d2_S} and \eqref{eq:d2_P}, we get
  \begin{align*}
    &d^2{\mc{L}} = -4\Trace(dKCA\underbrace{(dS+\lambda
      dP)}_{\overset{(a)}{=}0.} \bar{A}^{\transpose}M) \\
                &+ 2\Trace[dK W dK^{\transpose}M] 
                  = \text{vec}^{\transpose}(dK) (2W\otimes M) \text{vec}(dK),
  \end{align*}
  where
  $W \triangleq I_p + \lambda R + CA(S+\lambda
  P)A^{\transpose}C^{\transpose} + \lambda CQC^{\transpose}$, and
  where $(a)$ holds because $d\mc{L}(K,\lambda) = 0$ at the stationary
  point. The above expression implies that the Hessian of the
  Lagrangian is given by $H = 2W\otimes M$, which is positive-definite
  because $W>0$ and $M>0$. Thus, the considered stationary point
  corresponds to a local minimum.
  
\noindent \emph{Uniqueness of $\lambda$:} Next, we show that for a given $\delta$, the equation $\mc{P}(K^{*}(\lambda))=\delta$ has a unique solution. Note that for a given $\lambda>0$, the optimal gain $K^{*}(\lambda)$ in \eqref{eq:opt_gain} is the unique minimizer of the cost $\mc{C}(K) = \mc{S}(K) + \lambda \mc{P}(K)$. Let $\lambda_2> \lambda_1 >0$. Then, we have
\begin{align*}
 \mc{S}(K^{*}(\lambda_1)) + \lambda_1 \mc{P}(K^{*}(\lambda_1)) <  \mc{S}(K^{*}(\lambda_2)) + \lambda_1 \mc{P}(K^{*}(\lambda_2)), \\
 \mc{S}(K^{*}(\lambda_2)) + \lambda_2 \mc{P}(K^{*}(\lambda_2)) <  \mc{S}(K^{*}(\lambda_1)) + \lambda_2 \mc{P}(K^{*}(\lambda_1)).
\end{align*}
Adding the above two equations, we get
$\mc{P}(K^{*}(\lambda_2))<\mc{P}(K^{*}(\lambda_1))$. Thus,
$\mc{P}(K^{*}(\lambda))$ is a strictly decreasing function of
$\lambda$, and therefore, it is one-to-one.

  To conclude the proof, since the necessary and sufficient
  conditions for a local minimum are satisfied by a unique gain, the
  local minimum is also the global minimum.
\end{proof}

\smallskip
\begin{corollary}{\bf \emph{(Properties of 
      $\mc{P}^{*}(\lambda)$)}}\label{cor:monotonicity} The error
  $\mc{P}^{*}(\lambda)$ defined in Theorem \ref{theorem: optimal gain}
  is a strictly decreasing function of~$\lambda$.
\end{corollary}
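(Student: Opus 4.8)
The plan is to exploit the identity $\mc{P}^{*}(\lambda) = \mc{P}(K^{*}(\lambda))$ together with the variational characterization of $K^{*}(\lambda)$ as the \emph{unique} minimizer of the cost $\mc{C}_{\lambda}(K) \triangleq \mc{S}(K) + \lambda\mc{P}(K)$, which is established in the proof of Theorem \ref{theorem: optimal gain}. In fact, the strict monotonicity asserted here is exactly what the \emph{Uniqueness of $\lambda$} step of that proof already shows, so the cleanest route is to invoke that argument directly; below I outline how I would reconstruct it in a self-contained way.

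Fixing $0 < \lambda_1 < \lambda_2$ and writing $K_i \triangleq K^{*}(\lambda_i)$, I would evaluate $\mc{C}_{\lambda_i}$ at the competitor gain $K_j$ with $j\neq i$. Because $K_i$ is the unique minimizer of $\mc{C}_{\lambda_i}$, this yields the two suboptimality inequalities
\begin{align*}
  \mc{S}(K_1) + \lambda_1 \mc{P}(K_1) &< \mc{S}(K_2) + \lambda_1 \mc{P}(K_2), \\
  \mc{S}(K_2) + \lambda_2 \mc{P}(K_2) &< \mc{S}(K_1) + \lambda_2 \mc{P}(K_1).
\end{align*}
Adding them cancels the sensitivity terms and leaves $(\lambda_2 - \lambda_1)\big(\mc{P}(K_2) - \mc{P}(K_1)\big) < 0$, so $\lambda_2 > \lambda_1$ forces $\mc{P}^{*}(\lambda_2) < \mc{P}^{*}(\lambda_1)$. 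Since $K^{*}(\lambda)$, and hence $\mc{P}^{*}(\lambda)$, depends continuously on $\lambda$ through the Riccati solution $X$ in \eqref{eq: riccati equation optimal gain}, the strict decrease on $(0,\infty)$ extends to the endpoint $\lambda = 0$, covering the full parameter range of Theorem \ref{theorem: optimal gain}.

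The one delicate point, and the step I expect to be the main obstacle, is justifying the \emph{strictness} of the two inequalities, which holds only when $K_1 \neq K_2$; if the minimizers coincided, the inequalities would degenerate into equalities and the argument would give merely $\mc{P}^{*}(\lambda_2) \le \mc{P}^{*}(\lambda_1)$. To rule this out I would argue from the stationarity condition \eqref{eq:d_dK_L}: were $K_1 = K_2 = K$ to satisfy the first-order condition for both $\lambda_1$ and $\lambda_2$, subtracting the two conditions would give $(\lambda_1 - \lambda_2)\,\tfrac{d}{dK}\mc{P}(K) = 0$, hence $\tfrac{d}{dK}\mc{P}(K) = 0$ and therefore also $\tfrac{d}{dK}\mc{S}(K) = 0$; the former forces $K = \subscr{K}{kf}$ while the latter forces $K = 0$, a contradiction whenever $\subscr{K}{kf} \neq 0$. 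Alternatively, one can sidestep the distinctness question by noting that $g(\lambda) \triangleq \min_{K}\mc{C}_{\lambda}(K)$ is concave, being a pointwise infimum of functions affine in $\lambda$, and that the envelope theorem gives $g'(\lambda) = \mc{P}^{*}(\lambda)$; concavity of $g$ then yields that $\mc{P}^{*}$ is non-increasing, which one upgrades to strict monotonicity using that $\mc{P}(K^{*}(\lambda))$ sweeps the nondegenerate interval $[\mc{P}(\subscr{K}{kf}),\mc{P}(0)]$.
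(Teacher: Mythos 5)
Your proposal is correct and takes essentially the same route as the paper: the corollary is established there precisely by the \emph{Uniqueness of $\lambda$} step in the proof of Theorem~\ref{theorem: optimal gain}, namely the two suboptimality inequalities for the unique minimizers of $\mc{S}(K)+\lambda_i\,\mc{P}(K)$, added so that the sensitivity terms cancel and $(\lambda_2-\lambda_1)\bigl(\mc{P}(K^{*}(\lambda_2))-\mc{P}(K^{*}(\lambda_1))\bigr)<0$. Your additional step ruling out $K^{*}(\lambda_1)=K^{*}(\lambda_2)$ via the stationarity condition \eqref{eq:d_dK_L} is a welcome refinement of a detail the paper leaves implicit (its strict inequalities silently assume the two minimizers are distinct), so it strengthens rather than departs from the paper's argument.
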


\smallskip Theorem \ref{theorem: optimal gain} shows that the optimal
gain can be characterized in terms of a scalar parameter $\lambda$,
which depends on the performance level $\delta$ according to the
relation $\mc{P}^{*}(\lambda)=\delta$. Notice that $\lambda = 0$ if
$\delta = \mc{P}(0)$, and $\lambda$ approaches infinity as $\delta$
approaches $\mc{P}(\subscr{K}{\text{kf}})$. In other words,
$\underset{\lambda\rightarrow \infty}{\lim} K^{*}(\lambda) =
\subscr{K}{kf}$.  Further, Corollary \ref{cor:monotonicity} implies
that for a given $\delta$, the solution of
$\mc{P}^{*}(\lambda) = \delta$ can be found efficiently. For instance,
one can use the bisection algorithm on the interval
$[0,\subscr{\lambda}{max}]$, where
$\mc{P}^{*}( \subscr{\lambda}{max})>\delta$. These results also imply
a fundamental trade-off between performance and robustness of the
estimator.


\begin{theorem}{\bf \emph{(Accuracy vs robustness
      trade-off)}}\label{thm:trade-off}
  Let $\mc{S}^{*}(\delta)$ denote the solution of
  \eqref{eq:Opt_prob}. Then, $\mc{S}^{*}(\delta)$ is a strictly
  decreasing function of $\delta$ in the interval
  $\delta \in [\mc{P}(\subscr{K}{kf}),\mc{P}(0)]$.
\end{theorem}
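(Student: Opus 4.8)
The plan is to reduce the statement to two one-dimensional monotonicity facts via the Lagrangian parametrization of Theorem~\ref{theorem: optimal gain}. By that theorem, for every $\delta \in [\mc{P}(\subscr{K}{kf}),\mc{P}(0)]$ the optimal value of \eqref{eq:Opt_prob} is attained by the gain $K^*(\lambda)$, where $\lambda = \lambda(\delta) \geq 0$ is chosen so that $\mc{P}^*(\lambda) = \delta$; hence $\mc{S}^*(\delta) = \mc{S}(K^*(\lambda(\delta)))$. My strategy is to show that $\mc{S}(K^*(\lambda))$ is strictly increasing in $\lambda$ while $\lambda(\delta)$ is strictly decreasing in $\delta$, so that their composition is strictly decreasing.

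First I would record that $\delta \mapsto \lambda(\delta)$ is strictly decreasing. This is essentially the content of Corollary~\ref{cor:monotonicity} together with the one-to-one property established in the proof of Theorem~\ref{theorem: optimal gain}: since $\mc{P}^*(\lambda)$ is strictly decreasing in $\lambda$, it is invertible and its inverse $\lambda(\delta)$ is strictly decreasing, mapping $[\mc{P}(\subscr{K}{kf}),\mc{P}(0)]$ onto $[0,\infty)$ with $\lambda\to\infty$ at the left endpoint and $\lambda = 0$ at the right endpoint.

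The key step is to prove that $\lambda \mapsto \mc{S}(K^*(\lambda))$ is strictly increasing, which I would establish by the same exchange argument already used for $\mc{P}$ in the proof of Theorem~\ref{theorem: optimal gain}. Fix $\lambda_2 > \lambda_1 > 0$ and recall that $K^*(\lambda_i)$ is the unique minimizer of $\mc{C}_i(K) = \mc{S}(K) + \lambda_i\mc{P}(K)$. Strict optimality at $\lambda_1$ gives
\begin{align*}
  \mc{S}(K^*(\lambda_1)) + \lambda_1\mc{P}(K^*(\lambda_1)) < \mc{S}(K^*(\lambda_2)) + \lambda_1\mc{P}(K^*(\lambda_2)),
\end{align*}
and rearranging yields $\mc{S}(K^*(\lambda_1)) - \mc{S}(K^*(\lambda_2)) < \lambda_1[\mc{P}(K^*(\lambda_2)) - \mc{P}(K^*(\lambda_1))]$. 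Since $\mc{P}(K^*(\lambda_2)) < \mc{P}(K^*(\lambda_1))$ was already shown and $\lambda_1 > 0$, the right-hand side is negative, so $\mc{S}(K^*(\lambda_1)) < \mc{S}(K^*(\lambda_2))$, as desired.

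Finally I would compose the two facts: $\mc{S}^*(\delta) = \mc{S}(K^*(\lambda(\delta)))$ is a strictly increasing function of $\lambda$ evaluated along the strictly decreasing map $\lambda(\delta)$, hence strictly decreasing in $\delta$ on the open interval. I expect the only delicate point to be the right endpoint $\delta = \mc{P}(0)$, where $\lambda = 0$ and the exchange inequalities (which require $\lambda_1 > 0$) degenerate; this is handled separately by noting $\mc{S}(K^*(0)) = \mc{S}(0) = \Trace(S(0)) = 0$ from \eqref{eq:S_Lyap} with $\rho(A)<1$, which lies strictly below $\mc{S}(K^*(\lambda))$ for every $\lambda > 0$ and thus closes the interval. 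The main obstacle is therefore not any single computation but ensuring that the parametrization is a genuine bijection onto $[\mc{P}(\subscr{K}{kf}),\mc{P}(0)]$, so that the composition argument is valid across the entire stated interval.
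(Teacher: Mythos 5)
Your proposal is correct, and it takes a genuinely different route from the paper's proof. The paper argues locally: it invokes the stationarity condition derived in the proof of Theorem~\ref{theorem: optimal gain}, namely $\frac{\partial}{\partial K}\mc{S}(K)\big|_{K^*(\lambda)} = -\lambda\,\frac{\partial}{\partial K}\mc{P}(K)\big|_{K^*(\lambda)}$ with $\lambda>0$ on the interval of interest, and reads off the multiplier $-\lambda$ as the slope of the Pareto frontier, concluding that the optimal sensitivity strictly decreases as $\delta$ increases. That proof is a one-liner, but its final step (``the sensitivity decreases when the error increases'') is informal; made airtight, it would need an envelope-type argument that $\mc{S}^*(\delta)$ is differentiable along the frontier. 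Your argument is instead global and derivative-free: you recycle the exchange technique that the paper applies to $\mc{P}$ in its uniqueness-of-$\lambda$ step, but apply it to $\mc{S}$, showing that $\lambda\mapsto\mc{S}(K^*(\lambda))$ is strictly increasing, and then compose with the strictly decreasing inverse $\delta\mapsto\lambda(\delta)$ guaranteed by Corollary~\ref{cor:monotonicity}. This buys rigor (no smoothness of the value function is needed), at the cost of the quantitative information $\frac{d}{d\delta}\mc{S}^*(\delta)=-\lambda(\delta)$, i.e., the exchange rate between accuracy and robustness, which the paper's reasoning delivers. Two points to tighten in your write-up: the strictness of your first exchange inequality requires $K^*(\lambda_1)\neq K^*(\lambda_2)$, which you should note follows from $\mc{P}(K^*(\lambda_1))\neq\mc{P}(K^*(\lambda_2))$; and at the left endpoint $\delta=\mc{P}(\subscr{K}{kf})$, which no finite $\lambda$ attains, you need the limiting remark $\mc{S}(\subscr{K}{kf})=\lim_{\lambda\to\infty}\mc{S}(K^*(\lambda))=\sup_{\lambda}\mc{S}(K^*(\lambda))$ --- a boundary issue the paper's own proof glosses over as well. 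Your handling of the $\lambda=0$ endpoint is sound once you add that $K^*(\lambda)\neq 0$ for $\lambda>0$ (since $\mc{P}(K^*(\lambda))<\mc{P}(0)$), which gives $\mc{S}(K^*(\lambda))>0$ from \eqref{eq:S_Lyap}.
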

\begin{proof}
  From the proof of Theorem \ref{theorem: optimal gain}, we have
  \begin{equation}\label{eq: trade-off equation}
    \frac{\partial \mc S(K)}{\partial K}\Bigg|_{K^*(\lambda)} = -\lambda \frac{\partial \mc P(K)}{\partial K} \Bigg|_{K^*(\lambda)}.
  \end{equation}
  Since $\lambda> 0$ for $\delta\in[\mc{P}(K_{\text{kf}}),\mc{P}(0)]$ and
  $\mc{P}^{*}(\lambda) = \delta$, \eqref{eq: trade-off equation}
  implies that the sensitivity decreases when the error increases, and
  vice versa, so that a strict trade-off exists.
\end{proof}

\smallskip Theorem \ref{thm:trade-off} implies that there exists a
fundamental trade-off between the accuracy and robustness of a linear
filter against perturbations to measurement noise covariance
matrix. Therefore, the robustness of the linear filter in
\eqref{eq:lin_filter} in uncertain or adversarial environments can be
improved only at the expenses of its accuracy in nominal
conditions. Conversely, improving the robustness of the filter leads
to a lower accuracy in nominal conditions.

\smallskip
\begin{remark} {\bf \emph{(Design of optimally robust
      filters)}}\label{remark: robust}
  Let $\Delta R\geq 0$ denote a sufficiently small perturbation to $R$ such
  that the approximation
  $\Delta \mc{P}(K) \approx \Trace(K^{\transpose}MK\Delta R)$ holds (see
  \eqref{eq:d_dR_P}). Further, let $\Delta R$ be bounded as
  $\Trace(\Delta R)\leq \gamma$. Then, we have
  \begin{align*}
    \Delta \mc{P}(K) &= \Trace(K^{\transpose}MK \Delta R) \leq \Trace(K^{\transpose}MK) \rho(\Delta R)\\
               &= \Trace(S(K)) \rho(\Delta R) \leq \gamma \mc{S}(K).
  \end{align*}  
  Thus, given a gain $K$, the worst case performance degradation due
  to a bounded perturbation to $R$ is given by
  $\subscr{\mc{P}}{worst}(K) = \mc{P}(K) + \gamma
  \mc{S}(K)$. Therefore, a filter that is optimally robust (that is,
  it exhibits optimal worst-case performance in the presence of
  norm-bounded perturbations of the noise statistics) can be obtained
  by minimizing $\subscr{\mc{P}}{worst}(K)$. Note that this
  minimization problem is akin to the problem \eqref{eq:Opt_prob}, and
  that its solution is given by \eqref{eq:opt_gain} with
  $\lambda = \gamma^{-1}$. \oprocend
\end{remark}

\smallskip
\begin{remark} {\bf \emph{(Analysis when the system matrix $A$ is
      unstable)}}
  The accuracy-robustness trade-off shown above also holds when
  $A$ is unstable and $(A,C)$ is detectable. The analysis for this
  case follows the same reasoning as above, except that the range of
  interest for the error becomes
  $\delta \in [\mc{P}( \subscr{K}{kf}),\mc{P}(K^{*}_{\mc{S}})]$, with
  $K^{*}_{\mc{S}} = \arg \underset{K}{\min}\: \mc{S}(K).$ If $A$ does
  not have eigenvalues on the unit circle, then the Riccati equation
  \eqref{eq: riccati equation optimal gain} has a unique solution for
  $\lambda=0$ \cite{BH-AHS-TK:99} (Theorem 12.6.2), and
  $K^{*}_{\mc{S}} =K^{*}(0)$ (c.f. \eqref{eq:opt_gain}). In this case,
  $\mc{P}(K^{*}_{\mc{S}})$ is finite. The case when $A$ has
  eigenvalues on the unit circle is more involved, finding
  $K^{*}_{\mc{S}}$ is not trivial, and $\mc{P}(K^{*}_{\mc{S}})$ may
  become arbitrarily large. This aspect is left for future research
  (see Section \ref{sec: example} for an example with unit
  eigenvalues). \oprocend
\end{remark}

\smallskip
We conclude this section with an illustrative example.

\begin{example}\label{ex: trade-off}{\bf \emph{(Robustness versus performance
      trade-off)}}
  Consider the system in \eqref{eq: dynamical model} and \eqref{eq:
    output model} with matrices
  \begin{equation}
    \begin{aligned}
      A&=\begin{bmatrix}
        0.9 & 0 \\
        0.02 & 0.8
      \end{bmatrix}, \qquad C=
      \begin{bmatrix}
        0.5 & -0.8 \\
        0 & 0.7 
      \end{bmatrix}, \\
      Q&=\begin{bmatrix}
        0.5 & 0 \\
        0 & 0.7
      \end{bmatrix}, \hspace{0.9cm} R=
      \begin{bmatrix}
        0.5 & 0.1 \\
        0.1 & 0.8 
      \end{bmatrix}.
    \end{aligned}
  \end{equation}
  Fig. \ref{fig: trade-off}(a) shows the values $\mc{S}^*(\delta)$
  obtained from \eqref{eq:Opt_prob} over the range
  $\delta \in [\mc P(K_{\text{kf}}), \mc P(0)]$. Several comments are
  in order. First, as predicted by Theorem \ref{thm:trade-off}, the
  plot shows a trade-off between accuracy and robustness. Second, in
  accordance with Theorem \ref{theorem: optimal gain}, the solution to
  the minimization problem \eqref{eq:Opt_prob} implies that the
  equality constraint in \eqref{eq:Opt_prob} is active. Third, when
  $\delta=\mc{P}(\subscr{K}{\text{kf}})$, the minimization problem
  \eqref{eq:Opt_prob} returns the Kalman gain. Fourth, although the
  Kalman filter (depicted by the red dot) achieves the highest
  accuracy, it features the highest sensitivity (thus, lowest
  robustness) among the solutions of \eqref{eq:Opt_prob} over the
  range $\delta \in [\mc P(\subscr{K}{\text{kf}}), \mc P(0)]$. Thus,
  the estimator that is most accurate on the nominal data, is also the
  most sensitive to perturbations. Fifth, the linear filter obtained
  when $\delta=\mc P(0)$ exhibits the worst nominal performance, but
  is the most robust to changes in the noise
  statistics. Fig. \ref{fig: trade-off}(b) shows the values of
  $\mc{P}^*(\lambda)$ as a function of $\lambda$. We observe that
  $\mc{P}^* (\lambda)$ is a strictly decreasing function in $\lambda$
  in accordance with Corollary \ref{cor:monotonicity}. We also observe
  that the linear filter obtained when $\delta = \mc{P}(0)$, depicted
  by the green dot, has $\lambda=0$. Finally, the value
  $\mc{P}^*(\lambda)$ obtained when $\delta = \mc{P}(K_{\text{kf}})$
  cannot be shown since it requires $\lambda=\infty$.~\oprocend
\end{example}

\begin{figure}[!t]
  \centering
  \includegraphics[width=1\columnwidth,trim = 0cm 0cm 0cm 0cm, clip]{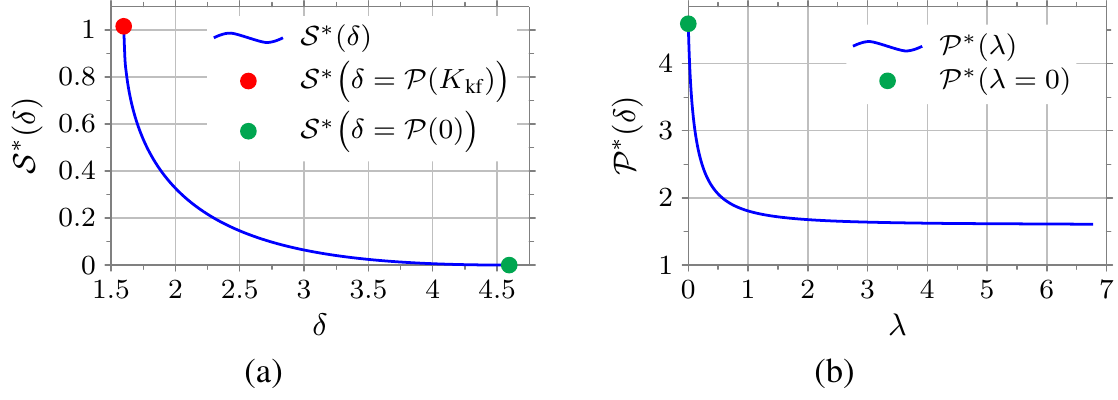}
  \caption{Panel (a) shows the accuracy versus robustness trade-off
    for the linear estimator \eqref{eq:lin_filter} and the system
    described in Example \ref{ex: trade-off}. The red dot denotes the
    Kalman filter, and the green dot denotes the linear filter with
    zero gain. The Kalman filter achieves optimal performance with the
    nominal data, yet it is the most sensitive to changes of the noise
    statistics. The opposite trade-off holds for the filter with zero
    gain. Panel (b) shows the estimation error as a function of
    $\lambda$ for the system described in Example \ref{ex: trade-off}. The green dot denotes the filter with zero gain.
    The performance of the Kalman filter does not appear in the plot
    since it requires $\lambda=\infty$.}
  \label{fig: trade-off}
\end{figure}



\section{Accuracy versus robustness trade-off in perception-based
  control}\label{sec: example}
In this section we illustrate the implication of our theoretical
results to the perception-based control setting shown in
Fig. \ref{fig: perc_cont}. We consider a vehicle obeying the dynamics
\cite{SD-NM-BR-VY:19}
\begin{align}\label{eq: car's dynamics}
  x(t+1) \!= \!
  \underbrace{
  \begin{bmatrix}
    1 & T_s & 0 & 0\\
    0 & 1 & 0 & 0\\
    0 & 0& 1 & T_s\\
    0 & 0 & 0 & 1
  \end{bmatrix}}_A
                x(t) \!+\! 
                \underbrace{
                \begin{bmatrix}
                  0 & 0\\
                  T_s & 0\\
                  0 & 0\\
                  0 & T_s
                \end{bmatrix}}_B
                      u(t)
                      \!+\! w(t),
\end{align}
where $x(t) \in \mathbb{R}^4$ contains the vehicle's position and
velocity in cartesian coordinates, $u(t) \in \mathbb{R}^2$ is the
input signal, $w(t) \in \real^4$ is the process noise which follows
the same assumptions as in \eqref{eq: dynamical model}, and $T_s$ is
the sampling time. We let the vehicle be equipped with a camera, whose
images are used to extract measurements of the vehicle's position. In
particular, let
\begin{align}\label{eq: perception map}
  y(t)=f_p\big(Z(t)\big)
\end{align}
denote the measurement equation, where $y(t)\in \mathbb{R}^2$ contains
measurements of the vehicle's position,
$Z(t)\in \mathbb{R}^{p\times q}$ describes the $p\times q$ pixel
images taken by camera, and
$f_p : \mathbb{R}^{p\times q} \rightarrow \mathbb{R}^2$ is the
perception map between the camera's images and the vehicle's
position. We approximate \eqref{eq: perception map} with the following
linear measurement model (see also \cite{SD-NM-BR-VY:19}):
\begin{align}\label{eq: virtual sensor}
  y(t)=
  \underbrace{
  \begin{bmatrix}
    1 & 0 & 0 & 0\\
    0 & 0 & 1 & 0
  \end{bmatrix}}_C
                x(t) + v(t),
\end{align}
where $v(t) \in \real^2$ denotes the measurement noise, which is
assumed to follow the same assumptions as in \eqref{eq: output model}.

We consider the problem of tracking a reference trajectory using the
measurements \eqref{eq: virtual sensor} and the dynamic controller
\begin{align}\label{eq: controller}
    \subscr{x}{c} (t+1) =& ( I - KC)(A - B L) \subscr{x}{c}(t) \nonumber \\ &+ K (y
    (t+1) - C \subscr{x}{d} (t+1) ),\nonumber \\ 
    u (t) =& - L \subscr{x}{c} (t) + \subscr{u}{d}(t),
\end{align}
where $L$ denotes the Linear-Quadratic-Regulator gain with error and
input weighing matrices $W_x > 0$ and $W_u > 0$, $K$ the gain of a
stable linear estimator as in \eqref{eq:lin_filter},\footnote{If $K$
  equals the gain of the Kalman filter for the given system, then the
  controller \eqref{eq: controller} corresponds to the
  Linear-Quadratic-Gaussian regulator.} $\subscr{x}{d}$ the desired
state trajectory, and $\subscr{u}{d}$ the control input
generating~$\subscr{x}{d}$.

%
\begin{figure}[!t]
  \centering
  \includegraphics[width=0.9755\columnwidth,trim = 0cm 0.04cm 0cm 0cm, clip]{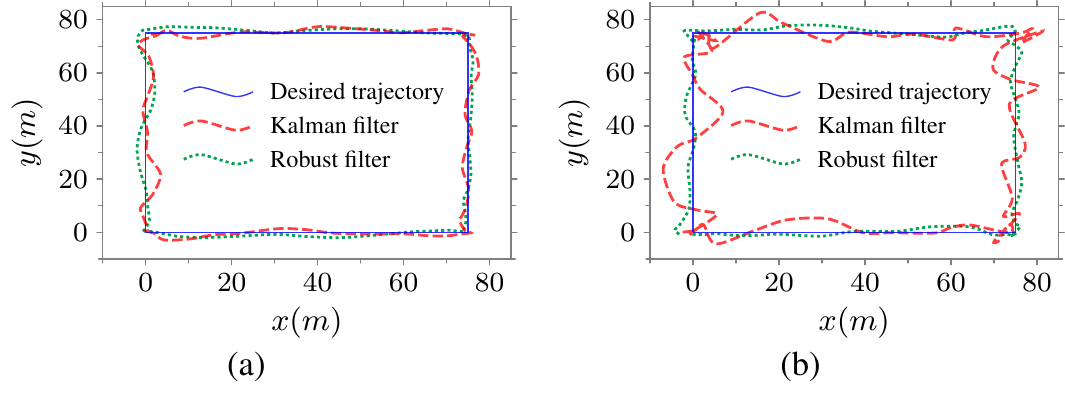}
  \caption{Panel (a) shows the trajectory tracking performance for the
    controller \eqref{eq: controller} with the Kalman filter (dashed red
    line) and a robust filter (dotted green line) in nominal noise statistics
    (the desired trajectory is shown by the solid blue line). The
    controller with the Kalman filter outperforms the other. Panel (b)
    shows the tracking performance for the two controllers using
    non-nominal noise statistics. In non-nominal conditions, the
    controller with the Kalman filter performs worse than the
    controller with the robust filter. The performance of a controller
    is measured based on the mean squared deviation between the
    controlled and nominal trajectories (see also Fig. \ref{fig:
      rmse_lqg}).}
  \label{fig: lqg}
\end{figure}
The statistics of the measurement noise in \eqref{eq: virtual sensor}
depend on how the perception map is trained and the data samples used
for the training. We aim to show that, if the estimator's gain in
\eqref{eq: controller} is designed to minimize the estimation error
based on the learned noise statistics, then the performance of the
perception-based controller \eqref{eq: controller} degrades
significantly if the learned statistics differ from the actual noise
statistics. Conversely, if the estimator's gain in \eqref{eq:
  controller} is designed based on Remark \ref{remark: robust}, then
the performance of the perception-based controller \eqref{eq:
  controller} remains robust across different values of the noise
statistics, although lower than the performance of the optimal
estimator operating with the nominal noise statistics. Fig. \ref{fig:
  lqg} shows the trajectory tracking performance for the controller
\eqref{eq: controller} for the Kalman filter and a robust filter with
$T_s = 1, Q = 0.1I_4, R=0.1 I_2, W_x =
\text{diag}(100,10^{-3},100,10^{-3}), W_u = 10^{-3}I_2$. The robust
filter corresponds to $\lambda=0.307$ (see \eqref{eq:opt_gain}). The
non-nominal covariance is $\bar{R}=2.5I_2$. We observe that the
controller based on the Kalman filter performs better in nominal
conditions, while the controller based on the robust filter performs
better in non-nominal conditions, as predicted by our theoretical
results. Fig. \ref{fig: rmse_lqg} shows the error of the Kalman filter
and the robust filter as a function of the changes of the measurement
noise covariance. We notice that for small deviations (near-nominal
conditions), the controller based on the Kalman filter performs better
than the controller based on the robust filter. However, when the
deviation of the noise statistics becomes substantially large, the
controller based on the robust filter performs better, thereby
validating our theoretical trade-off.

\begin{figure}[t]
  \centering
  \includegraphics[width=0.85\columnwidth,trim = 0cm 0cm 0cm 0cm,
  clip]{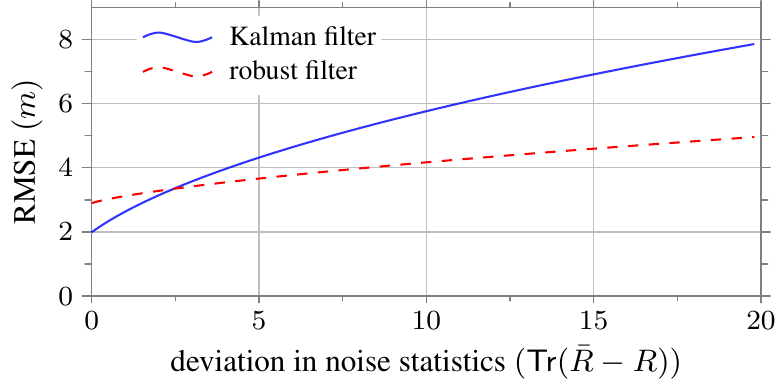}
  \caption{This figure shows the root mean square error (RMSE) of the
    controller \eqref{eq: controller} with the Kalman filter (solid blue
    line) and the robust filter (dashed red line), as a function of
    deviation between the measurement noise statistics. For small
    deviations, the controller using the Kalman filter outperforms the
    other. For large deviations, the controller using the robust
    filter outperforms the controller using the Kalman filter.}
  \label{fig: rmse_lqg}
\end{figure}

As shown in Fig. \ref{fig: perc_cont}(b), the perception error may not
be normally distributed, especially in the case of non-nominal
measurements. Although our theoretical results were obtained under the
assumption that the measurement (perception) error is normally
distributed, we next numerically show that a trade-off still exists
when the measurement (perception) error is not Gaussian. To this aim,
we consider the system in \eqref{eq: car's dynamics} and \eqref{eq:
  virtual sensor}, where the measurement noise is distributed as in
Fig. \ref{fig: perc_cont}(b) (these distributions are computed
numerically using the simulator CARLA \cite{AD-GR-FC-AL-VK:17}). We
design $6$ estimators using \eqref{eq:opt_gain} with different values
of $\delta$, and test the performance of each estimator in nominal and
non-nominal conditions. The performance of each estimator in nominal
and non-nominal environments, denoted by $\mc{P}_{\text{nom}}$ and
$\mc{P}_{\text{adv}}$, respectively, is computed using the sample
error covariance computed from the obtained samples of the estimation
error in nominal and non-nominal conditions. We approximate the
sensitivity of these estimators as the relative degradation of the
nominal performance when operating in non-nominal conditions, that is,
as $(\mc{P}_{\text{adv}} -
\mc{P}_{\text{nom}})/\mc{P}_{\text{nom}}$. 
Fig. \ref{fig: empirical tradeoff} shows the performance and
approximate sensitivity of the estimators. It can be seen that, even
when the measurement error is not normally distributed, the estimator
with largest (respectively, smallest) accuracy also has highest
(respectively, smallest) sensitivity. These numerical results suggest
that a tradeoff exists independently of the statistical properties of
the measurement error.

\begin{figure}[t]
  \centering
  \includegraphics[width=0.85\columnwidth,trim = 0cm -0.052cm 0cm 0cm,
  clip]{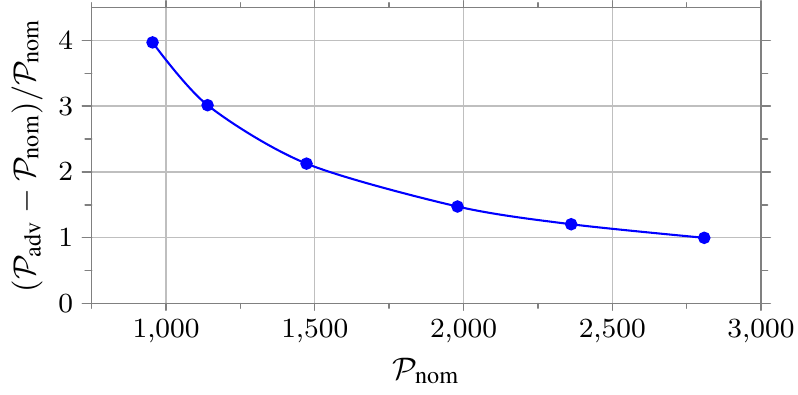}
  \caption{For the system \eqref{eq: car's dynamics} and \eqref{eq:
      virtual sensor} with measurement error distributed as in
    Fig. \ref{fig: perc_cont}(b), this figure shows the performance
    $\mc{P}_{\text{nom}}$ (i.e., trace of estimation error covariance)
    and the approximate sensitivity
    $(\mc{P}_{\text{adv}} - \mc{P}_{\text{nom}})/\mc{P}_{\text{nom}}$
    for $6$ different estimators obtained from \eqref{eq:opt_gain} by
    varying the desired accuracy $\delta$. Although the measurement
    error is not normally distributed, a trade-off still emerges
    between the accuracy of the estimators and their sensitivity.}
  \label{fig: empirical tradeoff}
\end{figure}

We conclude by showing that the identified trade-off between accuracy
and robustness of linear estimators also constrain the performance of
closed-loop perception-based control algorithms. To this aim, consider
the system \eqref{eq: car's dynamics} with controller \eqref{eq:
  controller}, where both the estimator gain $K$ and the controller
gain $L$ are now design parameters. For weighing matrices $W_x > 0$
and $W_u >0$, let the performance of \eqref{eq: controller}~be
\begin{align}\label{eq: cost function}
  \mc  J(K,L)=\mathbb{E}\Bigg[\frac{1}{T}\bigg(\sum_{t=0}^{T}x(t)^{\transpose}
  W_x x(t) + u(t)^{\transpose} W_u u(t)\bigg)\Bigg],
\end{align}
where $T$ denotes the time horizon. Notice that a lower value of the
cost $J$ is desirable, and the minimum (for $T\rightarrow \infty$) is
achieved by choosing the Kalman gain $K_{\text{kf}}$ with the linear
quadratic regulator gain $L_{\text{lqr}}$ for the matrices $W_x$ and
$W_u$. We adopt the following definition of sensitivity (this metric
is the equivalent of \eqref{eq: definition of sensitivity} for the
closed-loop performance):
\begin{align}\label{eq: sensitivity J}
  \mc S_{\mc J}(K,L)\triangleq & \Trace \Bigg[\frac{\partial \mc
                                 J(K,L)}{\partial R} \Bigg],
\end{align}
where $R$ is the noise covariance matrix of \eqref{eq: virtual
  sensor}. To see if a trade-off exists beween performance and
sensitivity of the closed-loop controller, we solve the following
problem:
\begin{equation}\label{eq:Opt_prob_lqg}
  \begin{aligned}
    \mc{S}_{\mc J}^* (\delta) = & \;\underset{K, L}{\min} \quad \mc{S}_{\mc J}(K, L)\\
    &\;\text{s.t.} \quad  \mc{J}(K, L) \leq \delta,
  \end{aligned}
\end{equation}
where $\delta$ is a constant satisfying
$\delta \geq \mc J(K_{\text{kf}} , L_{\text{lqr}})$. Notice that the
minimization problem \eqref{eq:Opt_prob_lqg} is similar to
\eqref{eq:Opt_prob} for the considered closed-loop control
setting. The results of the minimization problem
\eqref{eq:Opt_prob_lqg} are reported in Fig. \ref{fig: tradeoff lqg},
where it can be seen that a trade-off between the performance of the
controller \eqref{eq: controller} and its sensitivity still
exists. Interestingly, our numerical results show that the trade-off
curve can be obtained, equivalently, by optimizing over both the
controller and the estimator gain, by fixing the controller gain to be
the LQR gain and optimizing over the estimator gain, or by fixing the
estimator gain to be the Kalman gain and optimizing over the
controller gain. Further, if the controller gain is chosen to be the
optimal LQR gain, then the estimator gain that solves
\eqref{eq:Opt_prob_lqg} coincides with the estimator gain obtained in
Theorem \ref{theorem: optimal gain}. We leave a formal
characterization of these properties as the subject of future
investigation.

\begin{figure}[t]
  \centering
  \includegraphics[width=0.85\columnwidth,trim = 0cm 0cm 0cm 0cm,
  clip]{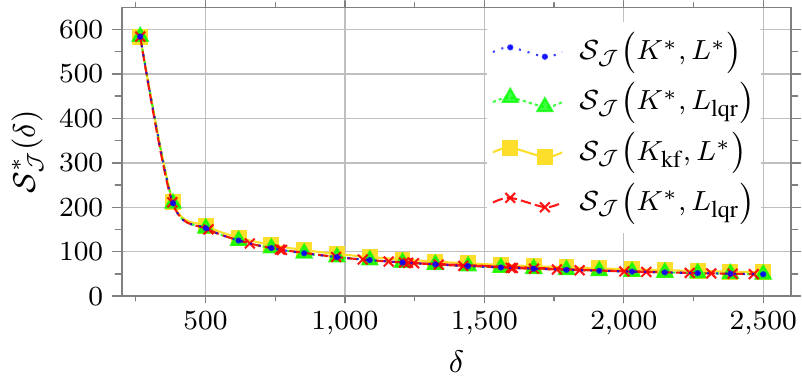}
  \caption{This figure shows the accuracy versus robustness trade-off
    in the closed loop setting described in Section \ref{sec:
      example}. The blue, green, and yellow lines denote the solution
    of \eqref{eq:Opt_prob_lqg}, where in the blue line we optimize
    over both gains, in the green line we fix the controller to the
    LQR gain and optimize over the estimator only, and in the yellow
    line we fix the estimator to the Kalman gain and optimize over the
    controller only. The red line denotes the trade-off between the
    accuracy in \eqref{eq: cost function} and the sensitivity in
    \eqref{eq: sensitivity J} with the estimator gain given in
    \eqref{eq:opt_gain} and the controller fixed to the LQR gain.}
  \label{fig: tradeoff lqg}
\end{figure}

\section{Conclusion and future work}\label{sec: conclusion}
In this paper we show that a fundamental trade-off exists between the
accuracy of linear estimation algorithms and their robustness to
unknown changes of the measurement noise statistics. Because of this
trade-off, estimators that are optimal with nominal sensing data may
perform poorly in practice due to variations of the measurements
statistics or different operational conditions. Conversely, robust
estimators obtained through a more detailed design process may
maintain similar performance levels in nominal and non-nominal
conditions, but considerably underperform in nominal conditions when
compared to nominally optimal estimators. To complement these results,
we characterize the structure of optimal estimators, for desired
levels of accuracy and robustness, and show that the trade-off also
constrain the performance of closed-loop perception-based controllers.

The results in this paper complement a recent line of research aimed
at deriving provable guarantees and performance limitations of machine
learning and data-driven algorithms
\cite{AALM-VK-FP:19,DT-SS-LE-AT-AM:19,ZD-CD-JW-YZ:19,HZ-YY-JJ-EPX-LEG-MIJ:19b},
and extend such results, for the first time, to an estimation and
control setting. This research area contains several timely and
challenging open problems, including an explicit quantification of the
performance of data-driven control algorithms when data is scarce and
corrupted, and the design of provably robust data-driven control
algorithms.

\bibliographystyle{unsrt}
\bibliography{alias,Main,FP,New}

\end{document}